\documentclass[reqno, 12pt]{article}

\pdfoutput=1

\usepackage{enumerate}
\usepackage{latexsym}
\usepackage[centertags]{amsmath}
\usepackage{amsfonts}
\usepackage{amssymb}
\usepackage{amsthm}
\usepackage{mathtools}
\usepackage{newlfont}
\usepackage{graphics}
\usepackage{color}
\usepackage{float}
\usepackage{diagbox}
\usepackage{tocloft}
\usepackage{titlesec}
\usepackage{booktabs}
\usepackage{extpfeil}
\usepackage{centernot}
\usepackage[colorlinks=true,linkcolor=blue,citecolor=red,urlcolor=blue]{hyperref} 
\usepackage[linesnumbered,ruled,vlined]{algorithm2e}
\usepackage{url}
\usepackage[T1]{fontenc}
\usepackage{lmodern}
\usepackage{microtype}
\usepackage[nameinlink,noabbrev,capitalize]{cleveref}
\usepackage{longtable}
\usepackage{rotating}
\usepackage{multirow}
\usepackage{extarrows}
\usepackage[sort,compress,numbers]{natbib}
\usepackage[utf8]{inputenc}
\numberwithin{equation}{section}
\usepackage{lmodern}
\usepackage{enumitem}

\newtheorem{theorem}{Theorem}[section]
\newtheorem{thm}[theorem]{Theorem}
\newtheorem{prop}[theorem]{Proposition}
\newtheorem{lem}[theorem]{Lemma}

\allowdisplaybreaks[4]

\SetKwInput{KwInput}{Input}                
\SetKwInput{KwOutput}{Output}              

\title{A polynomial time algorithm for almost bounded denumerant}

\author{Guoce Xin $^a$, Chen Zhang $^{b,*}$, and Zihao Zhang $^c$
\\[2mm]
{\small $^a$ School of Mathematical Sciences, Capital Normal University, }\\[-0.8ex]
{\small Beijing, 100048, PR China}\\
{\small $^b$ Center for Combinatorics, LPMC, Nankai University,}\\[-0.8ex]
{\small Tianjin 300071, PR China}\\
{\small $^c$ School of Mathematics and Statistics, Beijing Institute of Technology,}\\[-0.8ex]
{\small Beijing 102400, PR China}\\
{\small Email addresses: guoce\_xin@163.com (G. Xin), ch\_enz@163.com (C. Zhang),}\\[-0.8ex]
{\small zihao-zhang@foxmail.com (Z. Zhang)}\\[1.2ex]
$^*$ Corresponding author
}

\date{\today}

\begin{document}

\maketitle

\begin{abstract}
Sylvester's denumerant $d(t; \boldsymbol{A})$ counts the number of nonnegative integer solutions to $\sum_{i=1}^{N} a_i x_i = t$, where $\boldsymbol{A} = (a_1, \dots, a_N)$ is a sequence of positive integers with $\gcd(\boldsymbol{A}) = 1$. In 2025, Xin and Zhang gave a polynomial time algorithm in $N$ for computing $d(t; \boldsymbol{A})$ when the entries of $\boldsymbol{A}$ are bounded by a constant. In this paper, we extend this algorithm by incorporating Barvinok's algorithm, enabling it to handle the case where a fixed number of entries of $\boldsymbol{A}$ are allowed to be unbounded.
\end{abstract}

\noindent
\begin{small}
\emph{MSC2020}: Primary 05A17; Secondary 11P81, 05--08.
\end{small}

\noindent
\begin{small}
\emph{Keywords}: Sylvester's denumerant; Constant term; Barvinok's algorithm; Cyclotomic polynomial.
\end{small}

\section{Introduction}

For a positive integer sequence $\boldsymbol{A} = (a_1, a_2, \dots, a_{N})$ with $\gcd(\boldsymbol{A}) = 1$ and a nonnegative integer $t$, the \emph{Sylvester denumerant} \cite{Syl57}, denoted by $d(t; \boldsymbol{A})$, enumerates the number of nonnegative integer solutions to $\sum_{i=1}^{N} a_i x_i = t$. This function has been the subject of extensive research; see, e.g., \cite{Agn02,AL18,BBDDKV15,Bel43,FR02,Lis95,LXZ24,OSu18,SZ12,Uda22}. A classical result states that $d(t; \boldsymbol{A})$ is a quasi-polynomial in $t$ of degree $N-1$, i.e., $d(t; \boldsymbol{A}) = \sum_{i=0}^{N-1} d_i(t; \boldsymbol{A}) t^i$, where each $d_i(t; \boldsymbol{A})$ is a periodic function in $t$.

From the perspective of computational geometry, $d(t; \boldsymbol{A})$ counts the number of integer points in the $(N-1)$-dimensional rational polytope
\[
\mathcal{P}(t; \boldsymbol{A}) = \{\, \mathbf{x} \in \mathbb{R}^N : \boldsymbol{A} \cdot \mathbf{x} = t,\ \mathbf{x} \ge \mathbf{0} \,\}.
\]
For general polytopes, computing $\#(\mathcal{P} \cap \mathbb{Z}^N)$ is $\#P$-hard. Therefore, it is of significant theoretical and practical interest to identify families of polytopes for which this counting problem can be solved in polynomial time. Two prominent such families are known:
\begin{enumerate}
    \item When the dimension $N$ is fixed, Barvinok's algorithm \cite{Bar94,BP99} computes the integer point count in polynomial time.

    \item When the constraint matrix is $\Delta$-modular and the number of constraints is fixed, the algorithm of Gribanov and Zolotykh \cite{GZ22} achieves polynomial time.
\end{enumerate}
However, for the polytope $\mathcal{P}(t; \boldsymbol{A})$, Barvinok's algorithm requires the dimension $N$ to be fixed, while the $\Delta$-modular algorithm requires each $a_i$ to be bounded by a constant $\Delta$. This leaves a significant gap: the case where the dimension is large but the sequence $\boldsymbol{A}$ is ``almost bounded''.

In 2025, Xin and Zhang developed two algorithms, accompanied by \texttt{Maple} packages, for computing $d(t; \boldsymbol{A})$. The first, \texttt{CT-Knapsack} \cite{XZ25Denum}, is designed for computing the top coefficients of $d(t; \boldsymbol{A})$ and relies on Barvinok's algorithm \cite{Bar94,BP99}. The second, \texttt{Cyc-Denum} \cite{XZ25Wave}, is a polynomial time algorithm in $N$ when the entries of $\boldsymbol{A}$ are bounded by a constant $C$. It is rooted in the theory of cyclotomic polynomials and leverages results from \cite{XZZ25} on the efficient computation of generalized Todd polynomials.

In this paper, we address the intermediate ``almost bounded'' case, where a fixed number $k$ of entries are allowed to be unbounded, i.e., $\boldsymbol{A} = (a_1, \dots, a_n, b_1, \dots, b_k)$ with $k = N - n$ fixed, and $a_i \le C \ll b_j$. This scenario is beyond the reach of \texttt{Cyc-Denum} and is also challenging for \texttt{CT-Knapsack} when $N$ is large. Our main contribution is to demonstrate that a combination of these two approaches yields a polynomial time algorithm for computing $d(t; \boldsymbol{A})$ when $k$ is fixed. This result adds a new family to the class of polynomial time computable denumerants, bridging the gap between the bounded and fully unbounded cases. Furthermore, it provides an algebraic combinatorial counterpart to the lattice-based methods for Aardal-Lenstra type instances.

The computation of $d(t; \boldsymbol{A})$ can be reformulated as the following constant term:
\begin{equation}\label{e-denum}
d(t; \boldsymbol{A}) = \mathop{\mathrm{CT}}\limits_{\lambda} \sum_{x_i \ge 0} \lambda^{\sum_{i=1}^{n} a_i x_i + \sum_{j=1}^{k} b_j x_{n+j} - t} = \mathop{\mathrm{CT}}\limits_{\lambda} \frac{\lambda^{-t}}{\prod_{i=1}^{n} (1 - \lambda^{a_i}) \prod_{j=1}^{k} (1 - \lambda^{b_j})},
\end{equation}
where $\mathop{\mathrm{CT}}\limits_{\lambda} f(\lambda)$ denotes the constant term of the Laurent series expansion of $f(\lambda)$ at $\lambda=0$.

Let $F(\lambda)$ be a rational function. We define the operator $L_{\boldsymbol{a}}$ associated with the sequence $\boldsymbol{a} = (a_1, \dots, a_n)$ by
\[
L_{\boldsymbol{a}} F(\lambda) = \mathop{\mathrm{CT}}\limits_{\lambda} \frac{\lambda F(\lambda)}{\prod_{i=1}^{n} (1 - \lambda^{a_i})}.
\]
If $F(\lambda)$ is a power series in $\lambda$, then $L_{\boldsymbol{a}} F(\lambda) = 0$, since the argument of $\mathop{\mathrm{CT}}\limits_\lambda$ has only positive powers. Now set
\[
F(\lambda) = \frac{\lambda^{-t-1}}{\prod_{j=1}^{k} (1 - \lambda^{b_j})}.
\]
Consider the following partial fraction decomposition:
\begin{equation}\label{e-PFD}
F(\lambda) = P(\lambda^{-1}) + R(\lambda)=P(\lambda^{-1}) + \frac{B(\lambda)}{\prod_{j=1}^{k} (1 - \lambda^{b_j})},
\end{equation}
where $P(\lambda^{-1})$ is a Laurent polynomial in $\lambda$ with only negative powers, and $B(\lambda)$ is a polynomial of degree less than $b_1+\cdots+b_k$.
Thus $R(\lambda)$ is a proper rational function and is a power series in $\lambda$. Consequently, $L_{\boldsymbol{a}}( R(\lambda)) = 0$. Combining this with \eqref{e-denum} gives the key reduction
\begin{equation}\label{e-denumLa1}
d(t; \boldsymbol{A}) = L_{\boldsymbol{a}} ( F(\lambda) ) = L_{\boldsymbol{a}} ( P(\lambda^{-1})).
\end{equation}
Thus, the problem reduces to computing the action of $L_{\boldsymbol{a}}$ on the Laurent polynomial part $P(\lambda^{-1})$ obtained from the partial fraction decomposition of $F(\lambda)$.

However, direct computation of $P(\lambda^{-1})$ is impractical when $t$ is large. We convert $P(\lambda^{-1})$ to a constant term, which is indeed another denumerant, but parameterized. This allows us to compute $d(t; \boldsymbol{A})$ by the following algorithm in three steps:

Algorithm \texttt{AlmostBDenum} (sketch)

\begin{enumerate}
  \item[(S1)] Use Barvinok's algorithm to express $\lambda P(\lambda^{-1})$ as a ``short'' sum of rational functions. See \cref{ss-barvinok} for details.
  \item[(S2)] Apply the \texttt{CTGTodd} algorithm of Xin et al. \cite{XZZ25} to evaluate the limit for each summand. The result remains a short sum of rational functions.
  \item[(S3)] For each summand compute the residue using technique from Algorithm \texttt{Cyc-Denum}.
\end{enumerate}

The complexities of Steps (S1), (S2), and (S3) are established in \cref{thm-Bar}, \cref{prop-CTGToddComplexity}, and the analysis in \cref{ss-Wf}, respectively. \cref{thm-Bar} and \cref{prop-CTGToddComplexity} are polynomial for fixed $k$, while the complexity of Step (S3) is independent of $k$ and depends only on the bounded part $\boldsymbol{a}$. Together, these results yield a polynomial time algorithm for computing $d(t; \boldsymbol{A})$ whenever $k$ is fixed.

The implementation of our method combines components from \texttt{CT-Knapsack} (specifically, the Barvinok part) and \texttt{Cyc-Denum}. Therefore, we do not include new experimental data here. Extensive benchmarks demonstrating the efficiency of these constituent algorithms are already available in the original papers \cite{XZ25Denum,XZ25Wave}.

The rest of this paper is organized as follows. In \cref{s-preli}, we review the necessary concepts and tools. In \cref{s-denum}, we derive a rational form formula for $P(\lambda^{-1})$ using Barvinok's algorithm and then compute $d(t; \boldsymbol{A})$ via Algorithm \texttt{Cyc-Denum}. Concluding remarks are given in \cref{s-cr}.

\section{Preliminaries}\label{s-preli}

This section briefly introduces the tools used in our computation scheme for $d(t; \boldsymbol{A})$: (i) residue computations that connect $d(t; \boldsymbol{A})$ to generalized Todd polynomials; (ii) Barvinok's algorithm for short rational generating functions; and (iii) the log-exponential trick for efficient evaluation of generalized Todd polynomials.

\subsection{Results on residues}

A rational function $f(\lambda)$ has a unique Laurent series expansion at any $\lambda_0 \in \mathbb{C}$. We denote by $\mathop{\mathrm{Res}}\limits_{\lambda=\lambda_0} f(\lambda)$ the coefficient of $(\lambda-\lambda_0)^{-1}$ in the expansion. At $\lambda=0$, the series expansion is in $\mathbb{C}((\lambda))$, the field of Laurent series. The following relation holds:
\[
\mathop{\mathrm{CT}}\limits_{\lambda} \lambda f(\lambda) = \mathop{\mathrm{Res}}\limits_{\lambda=0} f(\lambda).
\]

The following two lemmas on residues are needed.
\begin{lem}[\cite{Jac30}]\label{lem-ResMeromor}
Let $c$ be a complex number. Suppose $g(s)$ is holomorphic in a neighborhood of $s=c$ and $f(\lambda)$ is meromorphic in a neighborhood of $\lambda=g(c)$. If $g^{\prime}(c)\neq 0$, then
$$\mathop{\mathrm{Res}}\limits_{\lambda=g(c)}f(\lambda)=\mathop{\mathrm{Res}}\limits_{s=c}f(g(s))g^{\prime}(s).$$
\end{lem}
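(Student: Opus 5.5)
The plan is to reduce to the defining property of residues via a local change of variables. Set $\lambda_0=g(c)$. Since $f$ is meromorphic near $\lambda_0$, on a punctured disc around $\lambda_0$ we have the Laurent expansion
\[
f(\lambda)=\sum_{m\ge -M} c_m(\lambda-\lambda_0)^m ,
\]
with $c_{-1}=\mathop{\mathrm{Res}}\limits_{\lambda=\lambda_0} f(\lambda)$.

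Because $g$ is holomorphic at $c$ with $g'(c)\neq 0$, the inverse function theorem makes $g$ a biholomorphism from a small disc $D$ around $c$ onto a neighborhood of $\lambda_0$. In particular, $g(s)-\lambda_0=(s-c)h(s)$ with $h$ holomorphic and $h(c)=g'(c)\neq0$. Hence $f(g(s))g'(s)$ is meromorphic at $s=c$, and by linearity of the residue (only finitely many singular terms, plus a holomorphic tail that contributes nothing) it suffices to treat the monomials $(\lambda-\lambda_0)^m$ one at a time.

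For a single monomial, the term to study is $(g(s)-\lambda_0)^m g'(s)$.

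\textbf{Case $m\neq -1$.} The function is the derivative $\frac{d}{ds}\frac{(g(s)-\lambda_0)^{m+1}}{m+1}$ of a function meromorphic at $c$, and the derivative of a Laurent series has zero $(s-c)^{-1}$ coefficient. So the residue is $0$, matching the residue of $(\lambda-\lambda_0)^m$ at $\lambda_0$.

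\textbf{Case $m=-1$.} Here $\frac{g'(s)}{g(s)-\lambda_0}=\frac{1}{s-c}+\frac{h'(s)}{h(s)}$, and $h'/h$ is holomorphic at $c$ since $h(c)\neq0$. So the residue is $1$, again matching.

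Summing over $m$ gives
\[
\mathop{\mathrm{Res}}\limits_{s=c} f(g(s))g'(s)=c_{-1}=\mathop{\mathrm{Res}}\limits_{\lambda=g(c)}f(\lambda).
\]

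An alternative route is to take a small circle $\gamma$ around $c$ in $D$. Its image $g\circ\gamma$ winds once around $\lambda_0$, because $g$ is injective on $D$ and orientation preserving. Then apply the substitution rule in contour integrals together with the residue theorem.

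The only delicate point is the $m=-1$ logarithmic-derivative computation, together with justifying termwise treatment of the infinite holomorphic tail. The tail composes to a function holomorphic at $c$, so it contributes nothing, and the hypothesis $g'(c)\neq0$ enters exactly through $h(c)\neq 0$.
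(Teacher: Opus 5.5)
Your proof is correct. The paper gives no proof of this lemma; it only cites Jacobi [Jac30], so there is no in-paper argument to compare against. Your argument is the classical proof of Jacobi's change-of-variables formula for residues:
\begin{itemize}
\item split $f$ into its finite principal part plus a holomorphic remainder, which composes to something holomorphic at $c$;
\item dispose of each term with $m\neq -1$ as the exact derivative $\frac{d}{ds}\frac{(g(s)-\lambda_0)^{m+1}}{m+1}$ of a function meromorphic at $c$;
\item compute the $m=-1$ term as the logarithmic derivative $\frac{1}{s-c}+\frac{h'(s)}{h(s)}$.
\end{itemize}

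The same computation also shows exactly where $g'(c)\neq 0$ enters. If you only assume $g(s)-\lambda_0=(s-c)^{e}h(s)$ with $h(c)\neq 0$ and $e\ge 1$, the $m=-1$ term becomes $\frac{e}{s-c}+\frac{h'(s)}{h(s)}$, and the residue is $e\,c_{-1}$. The hypothesis is precisely what forces $e=1$.

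The inverse-function/biholomorphism remark is not needed for your main argument. For the contour alternative, the claim that $g\circ\gamma$ winds once around $\lambda_0$ is most cleanly justified by the argument principle: $g-\lambda_0$ has exactly one simple zero inside $\gamma$.
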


\begin{lem}[{\cite{LXZ24}}]\label{lem-ResNonegit}
Let $r_1,r_2, \dots, r_k$ be positive integers, and $b \le r_1 + \cdots + r_k - 1$. Suppose
\[
f(\lambda)=\frac{\lambda^{b-1}}{(\lambda-\xi_1)^{r_1}\cdots (\lambda-\xi_k)^{r_k}}.
\]
Then
\[
\mathop{\mathrm{Res}}\limits_{\lambda=0} f(\lambda) = -\sum_{i=1}^k \mathop{\mathrm{Res}}\limits_{\lambda=\xi_i} f(\lambda).
\]
\end{lem}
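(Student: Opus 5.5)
The plan is to apply the residue theorem on the Riemann sphere. Set $f(\lambda)=\lambda^{b-1}/\prod_{i}(\lambda-\xi_i)^{r_i}$. I assume, as the notation suggests, that the $\xi_i$ are distinct and nonzero; if some $\xi_i=0$, merge it with the point $0$ and the same argument applies. Then $f$ is a rational function whose only possible poles in $\mathbb{C}$ are $0$ (when $b-1<0$) and the points $\xi_1,\dots,\xi_k$. For a rational function, the sum of all residues in $\mathbb{C}$ plus the residue at $\infty$ equals zero. This follows from integrating over a large circle $|\lambda|=R$ that encloses every finite pole. So it suffices to show that $\mathop{\mathrm{Res}}\limits_{\lambda=\infty} f(\lambda)=0$, which is the same as showing that $\frac{1}{2\pi i}\oint_{|\lambda|=R} f(\lambda)\,d\lambda$ vanishes for large $R$.

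\textbf{Step 1 (vanishing at infinity).} Put $r=r_1+\cdots+r_k$. For large $|\lambda|$ we have $|f(\lambda)|\sim|\lambda|^{b-1-r}$. The hypothesis $b\le r-1$ gives $b-1-r\le -2$, so $|f(\lambda)|=O(|\lambda|^{-2})$. The contour integral is therefore bounded by $2\pi R\cdot O(R^{-2})\to 0$. Since the integral does not depend on $R$ once $R>\max|\xi_i|$, it equals $0$. An equivalent algebraic check: expand $f$ at $\infty$ as a Laurent series in $\lambda^{-1}$. It begins at $\lambda^{b-1-r}$, which is at most $\lambda^{-2}$, so the coefficient of $\lambda^{-1}$ is zero. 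Alternatively, use \cref{lem-ResMeromor} with $g(s)=1/s$ to transport the residue at $\infty$ to $s=0$; there the function $f(1/s)s^{-2}=s^{r-b-1}\prod_i(1-\xi_i s)^{-r_i}$ is holomorphic, because $r-b-1\ge0$.

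\textbf{Step 2 (conclude).} The residue theorem on the circle now gives
$$0=\mathop{\mathrm{Res}}\limits_{\lambda=0} f(\lambda)+\sum_{i=1}^k \mathop{\mathrm{Res}}\limits_{\lambda=\xi_i} f(\lambda).$$
When $b\ge1$, $f$ is holomorphic at $0$ and its residue there is $0$, so the identity still holds. This is exactly the claim.

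The only real work is Step 1, namely checking the degree condition at infinity, and it is immediate from $b\le r-1$. The one subtlety is the convention for the residue at $0$. The paper uses the coefficient of $\lambda^{-1}$ in $\mathbb{C}((\lambda))$, which agrees with the complex-analytic residue for rational functions, so no further care is needed.
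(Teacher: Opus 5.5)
Your argument is correct, and it is the standard proof of this fact. The paper gives no proof of its own and only cites \cite{LXZ24}: the residues of a rational function on the Riemann sphere sum to zero, and $b\le r_1+\cdots+r_k-1$ gives $f=O(|\lambda|^{-2})$ at $\infty$, so the residue there vanishes. Two asides are slightly off but are not needed: \cref{lem-ResMeromor} does not literally cover $g(s)=1/s$, since $g$ is not holomorphic at $s=0$, and that transport is really just the definition $\mathop{\mathrm{Res}}\limits_{\lambda=\infty} f=-\mathop{\mathrm{Res}}\limits_{s=0} s^{-2}f(1/s)$; and if some $\xi_i=0$, merging yields $\mathop{\mathrm{Res}}\limits_{\lambda=0} f+\sum_{\xi_i\neq 0}\mathop{\mathrm{Res}}\limits_{\lambda=\xi_i} f=0$ rather than the displayed identity, so the lemma should simply be read with distinct nonzero $\xi_i$, which is how it is used for roots of unity.
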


\subsection{Barvinok's algorithm}\label{ss-barvinok}

A \emph{rational polyhedron} $\mathcal{P} \subset \mathbb{R}^d$ is the set of solutions of a finite system of linear inequalities with integer coefficients:
\[
\mathcal{P} = \{ x \in \mathbb{R}^d : \langle c_i, x \rangle \le \beta_i \text{ for } i = 1, \dots, m\},
\]
where $c_i \in \mathbb{Z}^d$ and $\beta_i \in \mathbb{Z}$. A bounded rational polyhedron is called a \emph{polytope}. We denote its lattice point generating function by
\[
\sigma_{\mathcal{P}}(\boldsymbol{y})=\sum_{\alpha \in \mathcal{P} \cap \mathbb{Z}^d } \boldsymbol{y}^\alpha =\sum_{\alpha \in \mathcal{P} \cap \mathbb{Z}^d } y_1^{\alpha_1}\cdots y_d^{\alpha_d}.
\]
Barvinok \cite{Bar94} proved a seminal result: in fixed dimension, $\sigma_{\mathcal{P}}(\boldsymbol{y})$ can be computed in polynomial time and expressed as a short sum of rational functions. The following theorem, as presented in \cite{BP99}, is the form we will use.

\begin{thm}[{\cite{Bar94,BP99}}]\label{thm-Bar}
Fix $d$. There exists a polynomial time algorithm that, for a given rational polyhedron $\mathcal{P} \subset \mathbb{R}^d$, computes $\sigma_{\mathcal{P}}(\boldsymbol{y})$ as
\[
\sigma_{\mathcal{P}}(\boldsymbol{y}) = \sum_{i\in I} \epsilon_i \frac{\boldsymbol{y}^{\alpha_{i0}}}{\prod_{j=1}^{d} (1-\boldsymbol{y}^{\alpha_{ij}})},
\]
where $\epsilon_i \in \{-1, 1\}$, $\alpha_{i0} \in \mathbb{Z}^d$, and $\alpha_{i1}, \dots, \alpha_{id}$ form a basis of $\mathbb{Z}^d$ for each $i$. The sum is ``short'', meaning the number $|I|$ of summands is bounded by a polynomial in the input size of $\mathcal{P}$.
\end{thm}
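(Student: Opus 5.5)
The plan is to follow Barvinok's original argument, in the streamlined form of \cite{BP99}: triangulate, decompose cones signedly into unimodular ones, and recombine.

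\textbf{Step 1 (Brion).} By Brion's theorem, $\sigma_{\mathcal{P}}(\boldsymbol{y})$ equals, as rational functions, the sum over the vertices $v$ of $\mathcal{P}$ of the generating functions $\sigma_{v+K_v}(\boldsymbol{y})$ of the supporting (tangent) cones. For fixed $d$, the vertices and the generators of the tangent cones are computed in polynomial time. There are at most $\binom{m}{d}$ vertices, which is polynomial in $m$. For unbounded polyhedra we would reduce to pointed ones first, or treat the recession cone via the same machinery.

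\textbf{Step 2 (triangulation).} Triangulate each tangent cone into simplicial cones without adding rays. This produces polynomially many simplicial cones. Lower-dimensional overlaps are handled by inclusion--exclusion, which involves only polynomially many terms in fixed dimension, or by Brion--Vergne/irrational-shift arguments that make the boundaries contain no lattice points.

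\textbf{Step 3 (signed decomposition).} Decompose each simplicial cone $K$ with primitive generators $u_1,\dots,u_d$ and index $\mathrm{ind}(K)=|\det(u_1,\dots,u_d)|$. By Minkowski's theorem, the parallelepiped $\{\sum \alpha_i u_i : |\alpha_i|\le \mathrm{ind}(K)^{1/d}\}$ contains a nonzero lattice point $w$. This point can be found in polynomial time in fixed dimension via LLL, at the cost of a slightly weaker exponent $\mathrm{ind}(K)^{(d-1)/d}$-type bound. Replacing each $u_i$ by $w$ gives cones $K_i$ with $\mathrm{ind}(K_i)=|\alpha_i|\,\mathrm{ind}(K)$. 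This reduction is the crux: it yields $\mathrm{ind}(K_i)\le \mathrm{ind}(K)^{(d-1)/d}$, and $\sigma_K$ is a signed sum of the $\sigma_{K_i}$ modulo lower-dimensional cones. Iterating takes $O(\log\log \mathrm{ind}(K))$ levels to reach index $1$. The total number of cones is $d^{O(\log\log \mathrm{ind})}=(\log \mathrm{ind})^{O(\log d)}$, which is polynomial in the input size since $\log\mathrm{ind}$ is polynomial in the bit size. The main obstacle is making this bound rigorous: carefully choosing the exponent so that the recursion depth is doubly logarithmic, and controlling the lower-dimensional pieces. For the latter we would use the ``modulo lower-dimensional cones'' identity in the algebra of polyhedra and then either an irrational shift or Brion's polarization trick, working with the dual cones, where lower-dimensional cones have zero generating function.

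\textbf{Step 4 (unimodular cones and output).} A unimodular cone $v+K$ with basis $\alpha_1,\dots,\alpha_d$ of $\mathbb{Z}^d$ has generating function $\boldsymbol{y}^{\alpha_0}/\prod_j(1-\boldsymbol{y}^{\alpha_j})$. Here $\alpha_0$ is the unique lattice point of the fundamental half-open parallelepiped translated to $v$, obtained by rounding the coordinates of $v$ in the basis $\alpha_j$. Sign adjustments from reversing rays keep the form with $\epsilon_i=\pm1$ after rewriting $1/(1-\boldsymbol{y}^{-\alpha})=-\boldsymbol{y}^{\alpha}/(1-\boldsymbol{y}^{\alpha})$. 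Summing over all cones gives the stated short formula, with $|I|$ and all bit sizes polynomial in the input.
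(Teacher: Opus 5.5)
The paper does not prove this theorem; it quotes it from Barvinok and Barvinok--Pommersheim. Your outline (Brion's theorem, triangulation, Barvinok's signed decomposition driven by a Minkowski vector, the duality trick, unimodular cones) is the argument of those references. However, the step you call the crux is misstated in two ways that break it.

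\emph{The exponent has the wrong sign.} The body to use is $\Pi=\{\sum_i\alpha_iu_i : |\alpha_i|\le\mathrm{ind}(K)^{-1/d}\}$. Its volume is $\mathrm{ind}(K)\,(2\,\mathrm{ind}(K)^{-1/d})^d=2^d$, so Minkowski gives $0\ne w\in\Pi\cap\mathbb{Z}^d$, and then $\mathrm{ind}(K_i)=|\alpha_i|\,\mathrm{ind}(K)\le\mathrm{ind}(K)^{(d-1)/d}$. With your bound $|\alpha_i|\le\mathrm{ind}(K)^{1/d}$ the claim is vacuous ($w=u_1$ qualifies) and yields only $\mathrm{ind}(K_i)\le\mathrm{ind}(K)^{(d+1)/d}$.

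\emph{You cannot settle for an LLL-approximate vector.} That only gives $\mathrm{ind}(K_i)\le c_d\,\mathrm{ind}(K)^{(d-1)/d}$, which no longer guarantees a decrease once $\mathrm{ind}(K)\le c_d^{\,d}$. The recursion then need not reach unimodular cones. Termination rests on the exact bound: for $\mathrm{ind}(K)>1$ it forces $|\alpha_i|<1$, hence a strict drop of the integer index at every step. Because $d$ is fixed, the exact $w$ is computable in polynomial time, e.g.\ by Lenstra's fixed-dimension integer programming, or by LLL reduction followed by enumeration of $O_d(1)$ candidates.

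Three further points need attention.

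\emph{Full-dimensionality.} Your argument tacitly assumes $\mathcal{P}$ is full-dimensional, since the index is a $d\times d$ determinant. The theorem covers every rational polyhedron, and the paper applies it to the $k$-dimensional polytope $\{x_0+\sum_j b_jx_j=t,\ x\ge 0\}\subset\mathbb{R}^{k+1}$. You need a unimodular change of coordinates (via Hermite normal form) taking $\mathrm{aff}(\mathcal{P})\cap\mathbb{Z}^d$ to $\mathbb{Z}^{d'}\times\{c\}$. Then each term is padded to $d$ denominator factors, e.g.\ via $1=\frac{1}{1-\boldsymbol{y}^{b}}-\frac{\boldsymbol{y}^{b}}{1-\boldsymbol{y}^{b}}$ for each missing basis vector $b$; this costs only a constant factor $2^{d-d'}$.

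\emph{Cone count.} The recursion depth is at most about $\log\log\mathrm{ind}(K)/\log\frac{d}{d-1}\lesssim d\log\log\mathrm{ind}(K)$. So the number of cones is $(\log\mathrm{ind}(K))^{O(d\log d)}$, not $(\log\mathrm{ind}(K))^{O(\log d)}$. This is still polynomial for fixed $d$.

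\emph{Duality.} Lower-dimensional cones do not themselves have zero generating function; their duals, which contain lines, do. So in the polarization route you triangulate and decompose $K^*$, obtaining $[K^*]\equiv\sum_i\epsilon_i[K_i^*]$ modulo lower-dimensional cones. Dualizing back gives $[v+K]\equiv\sum_i\epsilon_i[v+K_i]$ modulo translates of cones containing lines, and those contribute zero.

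With these repairs your sketch is a faithful outline of the cited proof.
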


\subsection{The log-exponential trick}\label{ss-logexp}

The log-exponential trick, introduced by Xin et al. in \cite{XZZ25}, is a key tool for computing generalized Todd polynomials efficiently. Consider $a \in \mathbb{Q}$ and finite multi-sets of nonzero integers $B_0, \bar B_0, B_1, \bar B_1, \dots, B_r, \bar B_r$. The \emph{generalized Todd polynomials} $gtd_d$ are defined by the generating function
\begin{equation}\label{e-ToddPoly}
F(s) = \sum_{d\ge 0} gtd_d s^d = \mathrm{e}^{as} \frac{\prod_{b \in B_0} \bar h(bs)}{\prod_{b \in \bar B_0} \bar h(bs)} \prod_{i=1}^{r}\frac{\prod_{b \in B_i} \bar h(bs, y_i)}{\prod_{b \in \bar B_i} \bar h(bs, y_i)},
\end{equation}
where
\[
\bar h(s) = \frac{s}{\mathrm{e}^s - 1} = 1 + O(s), \qquad \bar h(s, y) = \frac{1}{1 - y(\mathrm{e}^s-1)} = 1 + O(s).
\]

The trick proceeds by first computing $H(s) = \ln(F(s))$ using the expansions
\[
\ln \bar h(s) = -\sum_{k \ge 1} \frac{\mathcal{B}_k}{k \cdot k!} s^k, \qquad
\ln \bar h(s, y) = \sum_{k \ge 1} C_k(y) s^k,
\]
where $\mathcal{B}_k$ are the Bernoulli numbers and $C_k(y)$ are polynomials in $y$, and then computing $\mathrm{e}^{H(s)}$. A detailed complexity analysis of this procedure, treating the $y_i$'s as variables, was provided in \cite{XZZ25}.

\begin{thm}[{\cite{XZZ25}}]\label{thm-GTodd1}
Suppose $R$ is either $\mathbb{Q}$ or $\mathbb{Z}_p$ with prime $p>d$. Given $a, B_0, \bar B_0, B_1, \bar B_1, \dots$, $B_r, \bar B_r$ defining $F(s)$ as in \eqref{e-ToddPoly}, if $r \ge 1$, then $F(s) \pmod{\langle s^d \rangle}$ (equivalently, the sequence $(gtd_0, \dots, gtd_{d-1})$) can be computed using $O\big((r+1) d^{r+1} \log(d) + \log^2(d) \sum_{i=0}^{r} (|B_i| + |\bar B_i|)\big)$ operations in $R$.
\end{thm}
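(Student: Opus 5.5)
The plan is to follow the log-exponential trick literally: compute $H(s)=\ln F(s) \bmod s^d$ first, and then $\mathrm{e}^{H(s)} \bmod s^d$. The point is that $H$ splits additively as
\[
H(s)=as+H_0(s)+\sum_{i=1}^r H_i(s,y_i),
\]
where $H_0\in R[[s]]$ and $H_i\in R[y_i][[s]]$. So $H$ has a very sparse representation, and only the final exponentiation creates the dense $d^{r+1}$-size object. Throughout, the hypothesis $p>d$ guarantees that every division we perform is legitimate in $R$. These divisions are by $k$ and $k!$ for $k<d$, and by the denominators of $\mathcal{B}_k/(k\cdot k!)$, whose prime factors are at most $k+1\le d$.

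\textbf{Step 1: computing $H$.} From the expansions in \cref{ss-logexp}, the coefficient of $s^k$ in $H_0$ is $-\frac{\mathcal{B}_k}{k\cdot k!}\bigl(p_k(B_0)-p_k(\bar B_0)\bigr)$, where $p_k(B)=\sum_{b\in B}b^k$. Likewise, the coefficient of $s^k$ in $H_i$ is $C_k(y_i)\bigl(p_k(B_i)-p_k(\bar B_i)\bigr)$, with $\deg C_k\le k$.

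The power sums $p_1(B),\dots,p_{d-1}(B)$ of a multiset of size $m$ come from the series $\sum_{b\in B}\frac{1}{1-bs} \bmod s^d$. I would compute this as $-s\,Q'(s)/Q(s)$ with $Q(s)=\prod_{b}(1-bs)$ truncated mod $s^d$. The product $Q$ is built by a divide-and-conquer product tree, chunking the multiset so that each node has degree at most $d$. This costs $O(m\log^2 d)$ plus $O(d\log d)$ for one power series inversion. That gives the term $\log^2(d)\sum_i(|B_i|+|\bar B_i|)$.

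The Bernoulli numbers and the polynomials $C_k(y)$ for $k<d$ are computed once, in $O(d^2\log d)$. The $C_k(y)$ arise from $\ln\frac{1}{1-y(\mathrm{e}^s-1)}=\sum_{j\ge1}\frac{y^j(\mathrm{e}^s-1)^j}{j}$, which is a bivariate series of size $O(d^2)$. Assembling each $H_i$ then costs $O(d^2)$.

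\textbf{Step 2: exponentiating each piece.} I would exponentiate each piece separately:
\begin{itemize}
\item $\mathrm{e}^{as+H_0}$ is a univariate power series exponential, computed by Newton iteration in $O(d\log d)$.
\item For each $i$, $E_i=\mathrm{e}^{H_i}\in R[y_i][s]/\langle s^d\rangle$ has $s^k$-coefficient of $y_i$-degree at most $k$, so $O(d^2)$ entries. It can be computed either by Newton iteration with bivariate (Kronecker substituted) multiplication, or from the recurrence $kE_{i,k}=\sum_{j}jH_{i,j}E_{i,k-j}$. Either way this takes $O(d^2\log d)$ operations.
\end{itemize}

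\textbf{Step 3: multiplying the pieces.} Finally I would multiply the factors one at a time: $G_0=\mathrm{e}^{as+H_0}$ and $G_j=G_{j-1}E_j \bmod s^d$. The polynomial $G_{j-1}$ has $O(d^{j})$ monomials in $y_1,\dots,y_{j-1},s$. For each fixed monomial in $y_1,\dots,y_{j-1}$, the multiplication by $E_j$ is a bivariate product in $(s,y_j)$ of size $O(d^2)$, costing $O(d^2\log d)$ by FFT. Summed over those $O(d^{j-1})$ monomials this gives $O(d^{j+1}\log d)$. Summing over $j\le r$ gives at most $r\cdot O(d^{r+1}\log d)$, which together with Step 2 is within $O\bigl((r+1)d^{r+1}\log d\bigr)$.

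\textbf{Main obstacle.} I expect the delicate step to be this multiplication-and-degree bookkeeping. One must verify that truncation mod $s^d$ together with the degree bound $\deg_{y_i}\le$ (power of $s$) keeps each intermediate $G_j$ of size $O(d^{j+1})$ rather than $O(d^{2j})$. One must also check that fast multiplication can really be applied slice by slice within the stated bound. The case $r\ge1$ is needed only so that the univariate and power-sum costs are absorbed into the displayed expression.
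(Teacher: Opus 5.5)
Your proposal is correct. Its skeleton is the log-exponential trick the paper sketches in \cref{ss-logexp}; the paper gives no proof of this statement and defers the complexity analysis to \cite{XZZ25}. Your Step 1, which gets the coefficients of $H$ from power sums computed via a product tree plus one series inversion, is exactly where the term $\log^2(d)\sum_i(|B_i|+|\bar B_i|)$ comes from.

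The real difference is the last stage. As the paper describes it, $\mathrm{e}^{H(s)}$ is computed in one go, treating $H$ as a series in $s$ over $R[y_1,\dots,y_r]$, so a single exponential is taken in $r+1$ variables. You instead exponentiate the $r+1$ summands of $H$ separately, so every exponential is at most bivariate. You then bring in $y_1,\dots,y_r$ one at a time by slice-wise products. This gives a very transparent count, $\sum_{j=1}^{r}O(d^{j-1})\cdot O(d^2\log d)=O(d^{r+1}\log d)$, which lies within the stated bound and needs no multivariate Newton iteration. The ``obstacle'' you flag is harmless. Each $y_i$ enters only through $y_i(\mathrm{e}^{bs}-1)\in sR[y_i][[s]]$, so every coefficient of $s^k$ has total $y$-degree at most $k$, and no exponent reaches $d$.

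One correction in Step 2: ``either way'' is not right. Evaluating the recurrence $kE_{i,k}=\sum_j jH_{i,j}E_{i,k-j}$ directly needs, for each $k<d$, about $k$ products whose results have $y_i$-degree up to $k$. That is $\Omega(d^3)$ operations in total, which exceeds the budget $O(d^2\log d)$ when $r=1$. You need the Newton route, and it does give $O(d^2\log d)$. At $s$-precision $m$, every intermediate series, including $s$-derivatives, has $y_i$-degree at most $m$. So a Newton step at precision $m$ costs $O(m^2\log m)$ via Kronecker substitution, and the doubling steps sum to $O(d^2\log d)$.
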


The case where $\bar B_i$ is empty and $y_i \in R= \mathbb{Q}(\zeta)$ for each $i$, with $\zeta$ a primitive $f$-th root of unity, was considered in \cite{XZ25Wave}.

\begin{thm}[{\cite{XZ25Wave}}]\label{thm-GTodd2}
Let $d$ be a positive integer. For given $a, B_0, B_1, \dots, B_r$, let
\[
F(s) = \sum_{d\ge 0} gtd_d s^d = \mathrm{e}^{as} \prod_{b \in B_0} \bar h(bs) \prod_{i=1}^{r}\prod_{b \in B_i} \bar h(bs, y_i),
\]
where $y_i \in R$. Then we can compute $F(s) \pmod{\langle s^d \rangle}$ using $O\big((r+1) d \log(d) + \log^2(d) \sum_{i=0}^{r} |B_i|\big)$ operations in $R$.
\end{thm}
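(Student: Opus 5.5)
The plan is to apply the log-exponential trick of \cref{ss-logexp}. The point is that each $y_i$ is now a scalar in $R$ rather than a formal variable. Every logarithmic expansion therefore collapses to a univariate power series with coefficients in $R$, and this removes the factor $d^{r}$ appearing in \cref{thm-GTodd1}. Concretely, I would compute $H(s)=\ln F(s) \bmod s^d$, where
\[
H(s)= as-\sum_{k\ge1}\frac{\mathcal B_k}{k\cdot k!}\,p_k(B_0)\,s^k+\sum_{i=1}^r\sum_{k\ge1}C_k(y_i)\,p_k(B_i)\,s^k,
\]
and $p_k(B)=\sum_{b\in B}b^k$ denotes the $k$-th power sum of the multiset $B$. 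I would then recover $F(s)=\mathrm e^{H(s)} \bmod s^d$ by a fast power series exponential. Throughout, $R$ has characteristic $0$ (it is $\mathbb{Q}$ or $\mathbb{Q}(\zeta)$), so the divisions by $k$ and $k!$ are legitimate.

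\textbf{Step 1: power sums.} For each $i=0,\dots,r$ I need $p_1(B_i),\dots,p_{d-1}(B_i)$. These are the coefficients of
\[
\sum_{b\in B_i}\frac{1}{1-bx}=\frac{-x\,D_i'(x)}{D_i(x)}+|B_i|,\qquad D_i(x)=\prod_{b\in B_i}(1-bx).
\]
I compute $D_i(x)\bmod x^d$ with a balanced product tree, truncating every intermediate product at degree $d$. A tree over $|B_i|$ linear factors has $O(\log|B_i|)$ levels, but once nodes reach degree $d$ each level costs $O(|B_i|\log d)$ with fast multiplication. Chunking $B_i$ into blocks of size $d$ keeps the total at $O(|B_i|\log^2 d)$ plus $O(d\log d)$ overhead. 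One power series inversion and one multiplication, each $O(d\log d)$, then give all the power sums. This is the step I expect to need the most care: the accounting has to land exactly at $\log^2(d)\sum_i|B_i|+O((r+1)d\log d)$, and in particular must not grow with $\log|B_i|$ when $|B_i|\gg d$. Chunking into degree-$d$ blocks and then summing the rational functions $-xD'/D$ blockwise is how I would ensure this.

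\textbf{Step 2: the coefficient series.} The Bernoulli part $\mathcal B_k/(k\cdot k!)$ for $k<d$ is obtained once, in $O(d\log d)$, from the inverse of $(\mathrm e^s-1)/s$. For each $i$ I compute the numbers $C_k(y_i)$ directly: form $g_i(s)=1-y_i(\mathrm e^s-1)\bmod s^d$, whose constant term is $1$, and take $-\ln g_i(s)$ by derivative, inversion and integration in $O(d\log d)$ operations. This costs $O(r\,d\log d)$ in total.

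\textbf{Step 3: assembly.} $H(s)$ is assembled coefficientwise in $O((r+1)d)$ operations, and a final Newton-iteration exponential costs $O(d\log d)$. Summing the three steps gives the claimed $O\big((r+1)d\log d+\log^2(d)\sum_{i=0}^r|B_i|\big)$ operations in $R$.
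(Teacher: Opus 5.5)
Your proposal is correct and is exactly the log-exponential trick of \cref{ss-logexp} specialized to numerical $y_i\in R$: power sums of the $B_i$, a univariate logarithm for each $y_i$, and one exponential. This is the route behind the cited result of \cite{XZ25Wave}, which the present paper only quotes without proof. The chunking in Step 1 is harmless but unnecessary: in the plain truncated product tree, level $\ell$ above degree $d$ costs $O(|B_i|\,d\log d/2^{\ell})$, so these levels form a geometric series summing to $O(|B_i|\log d)$, and no $\log|B_i|$ factor ever appears.
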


\section{The computation scheme for $d(t; \boldsymbol{A})$}\label{s-denum}

Recall the reduction $d(t; \boldsymbol{A}) = L_{\boldsymbol{a}}(P(\lambda^{-1}))$ from \eqref{e-denumLa1}. Let
\[
h(\lambda) := \lambda P(\lambda^{-1}),
\]
which is a Laurent polynomial in $\lambda$ with only nonpositive powers. Then
\[
d(t; \boldsymbol{A}) = \mathop{\mathrm{CT}}\limits_{\lambda} \frac{h(\lambda)}{\prod_{i=1}^{n} (1 - \lambda^{a_i})}
= \mathop{\mathrm{Res}}\limits_{\lambda=0} \frac{\lambda^{-1} h(\lambda)}{\prod_{i=1}^{n} (1 - \lambda^{a_i})}
= -\sum_{\zeta} \mathop{\mathrm{Res}}\limits_{\lambda = \zeta} \frac{\lambda^{-1} h(\lambda)}{\prod_{i=1}^{n} (1 - \lambda^{a_i})},
\]
where the sum is over $\zeta \in \Theta:=\{\zeta : \zeta^{a_i} = 1 \text{ for some } i\}$, and the last equality follows from \cref{lem-ResNonegit}. By setting $\lambda = \zeta \mathrm{e}^s$ and applying \cref{lem-ResMeromor}, we obtain
\[
d(t; \boldsymbol{A}) = -\sum_{\zeta} \mathop{\mathrm{Res}}\limits_{s = 0} \frac{\zeta \mathrm{e}^s (\zeta \mathrm{e}^s)^{-1} h(\zeta \mathrm{e}^s)}{\prod_{i=1}^{n} (1 - \zeta^{a_i} \mathrm{e}^{a_i s})} = - \sum_{f} W_f(t; \boldsymbol{A}),
\]
where $f$ ranges over all divisors of the $a_i$'s, and
\[
W_f(t; \boldsymbol{A}) = \mathop{\mathrm{Res}}\limits_{s = 0}\sum_{\zeta \in \Theta_f} \frac{h(\zeta \mathrm{e}^s)}{\prod_{i=1}^{n} (1 - \zeta^{a_i} \mathrm{e}^{a_i s})}
\]
with $\Theta_f$ denoting the set of primitive $f$-th roots of unity.

The notation $W_f(t; \boldsymbol{A})$ is borrowed from the theory of Sylvester waves (see, e.g., \cite{Uda22,XZ25Wave}). For a classical Sylvester wave, $h(\lambda)$ is a monomial, whereas in this paper it is a short rational encoding of a Laurent polynomial obtained from Steps (S1) and (S2).

\subsection{A short rational encoding of $h(\lambda)$}\label{ss-plambda}
Recall that $h(\lambda) = \lambda P(\lambda^{-1})$. Its short rational encoding is derived as follows.
\begin{prop}\label{prop-plambda}
Let $P(\lambda^{-1})$ be as in \eqref{e-PFD}. Then the following type of formula can be computed in polynomial time.
\[
\lambda P(\lambda^{-1}) = \sum_{i \in I} g_i (\boldsymbol{y})\Big|_{\boldsymbol{z}=\mathbf{1}} = \sum_{i \in I} \epsilon_i \frac{\boldsymbol{y}^{\alpha_{i0}}}{\prod_{j=1}^{k+1} (1 - \boldsymbol{y}^{\alpha_{ij}})} \Big|_{\boldsymbol{z}=\mathbf{1}},
\]
where $\boldsymbol{y} =(\lambda^{-1}, \boldsymbol{z})= (\lambda^{-1}, z_1, \dots, z_k)$, $\epsilon_i \in \{-1, 1\}$, $\alpha_{i0} \in \mathbb{Z}^{k+1}$, $\alpha_{i1}, \dots, \alpha_{i,k+1}$ is a basis of $\mathbb{Z}^{k+1}$ for each $i$, and size $|I|$ of the index set $I$ is bounded by a polynomial in the input size $\sum \log b_j $.
\end{prop}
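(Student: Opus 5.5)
The plan is to identify $\lambda P(\lambda^{-1})$ as the lattice point generating function of a $(k+1)$-dimensional rational polytope, specialized at $\boldsymbol{z}=\mathbf{1}$. Then \cref{thm-Bar} with $d=k+1$, which is fixed because $k$ is fixed, produces the required short sum directly.

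\textbf{Step 1: $P(\lambda^{-1})$ is a truncation.} Expand
\[
F(\lambda)=\frac{\lambda^{-t-1}}{\prod_{j=1}^k(1-\lambda^{b_j})}=\sum_{x\in\mathbb{Z}_{\ge 0}^k}\lambda^{\sum_j b_jx_j-t-1}
\]
as a Laurent series at $\lambda=0$. In \eqref{e-PFD}, $R(\lambda)$ is a power series and $P(\lambda^{-1})$ has only negative powers, and the Laurent expansion of $F$ is unique. Hence $P(\lambda^{-1})$ is exactly the sum of the terms with $\sum_j b_jx_j-t-1<0$, that is, with $\sum_j b_jx_j\le t$. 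Multiplying by $\lambda$ gives
\[
\lambda P(\lambda^{-1})=\sum_{x\ge 0,\ \sum_j b_jx_j\le t}\lambda^{-(t-\sum_j b_jx_j)}.
\]
This is a finite sum.

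\textbf{Step 2: encode the sum as a polytope.} Let $\mathcal{Q}=\{(m,x)\in\mathbb{R}^{k+1}: m\ge 0,\ x\ge 0,\ m+\sum_j b_jx_j=t\}$. This is a rational polytope described by $k+3$ inequalities, two of which encode the equation. Its input size is $O(\log t+\sum_j\log b_j)$. Its lattice points are in bijection with the terms above, and the exponent of $\lambda^{-1}$ in each term is $m$. With $\boldsymbol{y}=(\lambda^{-1},\boldsymbol{z})$ we get
\[
\sigma_{\mathcal{Q}}(\boldsymbol{y})=\sum_{(m,x)\in\mathcal{Q}\cap\mathbb{Z}^{k+1}}\lambda^{-m}\boldsymbol{z}^{x},
\]
a Laurent polynomial, and therefore $\sigma_{\mathcal{Q}}(\boldsymbol{y})|_{\boldsymbol{z}=\mathbf{1}}=\lambda P(\lambda^{-1})$. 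Applying \cref{thm-Bar} with $d=k+1$ gives, in polynomial time, $\sigma_{\mathcal{Q}}=\sum_{i\in I}\epsilon_i\,\boldsymbol{y}^{\alpha_{i0}}/\prod_{j=1}^{k+1}(1-\boldsymbol{y}^{\alpha_{ij}})$, where each $\alpha_{i1},\dots,\alpha_{i,k+1}$ is a basis of $\mathbb{Z}^{k+1}$ and $|I|$ is polynomial in the input size. This is the claimed formula. The role of $t$ in the size bound should be noted: $\log t$ also enters the input size, and the statement's bound in $\sum\log b_j$ should be read as including it.

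\textbf{Expected main obstacle.} The main difficulty is the meaning of the specialization $|_{\boldsymbol{z}=\mathbf{1}}$. The sum is a Laurent polynomial, so setting $\boldsymbol{z}=\mathbf{1}$ is harmless for the sum as a whole. Individual summands, however, may have poles there, for instance when some $\alpha_{ij}$ has zero $\lambda$-component. The specialization must therefore be understood as a limit taken for the whole sum; this is what Step (S2) evaluates. For the existence claim here it suffices to observe that the identity of rational functions in $\boldsymbol{y}$ holds, so the limit of the sum is $\lambda P(\lambda^{-1})$.

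A second, minor point is that $\mathcal{Q}$ is not full-dimensional. Barvinok's algorithm as stated in \cref{thm-Bar} covers arbitrary rational polyhedra, so no change is needed. If one prefers a full-dimensional setting, one can instead parametrize by $x\in\mathbb{R}^k$ with $m=t-\sum_j b_jx_j$ and substitute the monomial $\lambda^{-t}\prod_j(\lambda^{b_j}z_j)^{x_j}$. This yields the same type of formula, after padding with the trivial extra coordinate.
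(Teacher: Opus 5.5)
Your proposal is correct and is essentially the paper's proof. The paper identifies $\lambda P(\lambda^{-1})$ with $\sigma_{\mathcal{P}}(\lambda^{-1}, z_1, \dots, z_k)\big|_{\boldsymbol{z}=\mathbf{1}}$ for the same polytope $\{x_0+\sum_{j} b_j x_j = t,\ x \ge 0\}$ and applies \cref{thm-Bar} with $d=k+1$; the only difference is that it derives this identity through a constant term in an auxiliary variable $\mu$ and a geometric series in $\mu z_0$, whereas you read off the negative-power part of the Laurent expansion of $F$ directly (and your remarks that $\log t$ enters the input size and that $|_{\boldsymbol{z}=\mathbf{1}}$ is a limit of the whole sum are accurate).
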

\begin{proof}
Let $z_0 = \lambda^{-1}$. Direct computation by $[\lambda^{-\ell}] P(\lambda^{-1})= [\lambda^{-\ell}] F(\lambda)$ yields
\begin{align*}
z_0^{-1} P(z_0) &= \sum_{\ell > 0} [\mu^{-\ell}] \frac{\mu^{-t-1}}{\prod_{j=1}^{k} (1 - \mu^{b_j} z_j)} z_0^{\ell - 1} \Big|_{\boldsymbol{z}=\mathbf{1}}
= \sum_{\ell > 0} \mathop{\mathrm{CT}}\limits_{\mu} \frac{\mu^{-t-1} \mu^{\ell} z_0^{\ell-1}}{\prod_{j=1}^{k} (1 - \mu^{b_j} z_j)} \Big|_{\boldsymbol{z}=\mathbf{1}} \\
&= \mathop{\mathrm{CT}}\limits_{\mu} \frac{\mu^{-t} \sum_{\ell > 0} (\mu z_0)^{\ell-1}}{\prod_{j=1}^{k} (1 - \mu^{b_j} z_j)} \Big|_{\boldsymbol{z}=\mathbf{1}}
= \mathop{\mathrm{CT}}\limits_{\mu} \frac{\mu^{-t}}{ (1 - \mu z_0) \prod_{j=1}^{k} (1 - \mu^{b_j} z_j)} \Big|_{\boldsymbol{z}=\mathbf{1}} \\
&= \sum_{x_i \ge 0} \mathop{\mathrm{CT}}\limits_{\mu} \mu^{x_0 + \sum_{j=1}^{k} b_j x_j - t} z_0^{x_0} z_1^{x_1} \cdots z_k^{x_k} \Big|_{\boldsymbol{z}=\mathbf{1}} \\
&= \sigma_{\mathcal{P}}(z_0, z_1, \dots, z_k) \Big|_{\boldsymbol{z}=\mathbf{1}},
\end{align*}
where
\[
  \mathcal{P} = \{ (x_0, x_1, \dots, x_k) \in \mathbb{R}^{k+1} : x_0 + \sum_{j=1}^{k} b_j x_j = t \text{ and } x_0, x_1, \dots, x_k \ge 0 \}
\]
is a rational polytope. The proposition then follows from \cref{thm-Bar}.
\end{proof}

By \cref{prop-plambda}, we have
\[
h(\lambda) = \lambda P(\lambda^{-1}) = \sum_{i \in I} g_i(\lambda^{-1}, z_1, \dots, z_k) \big|_{\boldsymbol{z}=\mathbf{1}}.
\]

It remains to take the limit at $z_j=1$ for all $j$. This type of limit has been addressed in \cite{Xin15} and discussed further in \cite{XZZ25}. The idea is to first choose an integral vector $(c_1,c_2,\dots, c_k)$ and make the substitution $z_j = \kappa^{c_j}$ such that there are no zeros in the denominator. Now, we need to compute the limit at $\kappa=1$. By letting $\kappa = \mathrm{e}^u$, we can compute separately the constant term of $g_i(\lambda^{-1}, \mathrm{e}^{c_1 u}, \mathrm{e}^{c_2 u}, \dots, \mathrm{e}^{c_k u})$ in $u$. Each $g_i(\lambda^{-1}, \mathrm{e}^{c_1 u}, \mathrm{e}^{c_2 u}, \dots, \mathrm{e}^{c_k u})$ has the following structure:
\[
\frac{\lambda^{m_0} \mathrm{e}^{b_0 u}}{\prod_{b \in B_0} (1-\mathrm{e}^{bu})} \prod_{j=1}^{r} \frac{1}{\prod_{b \in B_j} (1-\lambda^{m_j} \mathrm{e}^{bu})} \prod_{j=r+1}^{r+r'} \frac{1}{(1-\lambda^{m_j})},
\]
where $m_0, b_0 \in \mathbb{Z}$ , $m_j \in \mathbb{Z} \setminus \{0\}$ for $1\le j \le r+r'$, and $B_j \subset \mathbb{Z} \setminus \{0\}$ are finite multi-sets for $0 \le j \le r$. Xin et al. proposed Algorithm \texttt{CTGTodd} based on the log-exponential trick for computing this type of constant term (see \cite[Section 4]{XZZ25}). The algorithm computes $\mathop{\mathrm{CT}}\limits_u g_i(\lambda^{-1}, \mathrm{e}^{c_1 u}, \mathrm{e}^{c_2 u}, \dots, \mathrm{e}^{c_k u}) \pmod p$ for a suitable prime $p$ with a good complexity result as stated in \cref{thm-GTodd1}. Indeed, this reslut also works over $\mathbb{Q}$.

\begin{prop}\label{prop-CTGToddComplexity}
Algorithm \texttt{CTGTodd} correctly computes $\mathop{\mathrm{CT}}\limits_ug_i(\lambda^{-1}, \mathrm{e}^{c_1 u}, \mathrm{e}^{c_2 u}, \dots, \mathrm{e}^{c_k u})$ in time $O((k+2)^{k+3} \log(k+2)+  (k+1)\log^2(k+2))$. The output takes the form of a sum of at most $\binom{|B_0| + r}{r}$ simple rational functions of the form:
\begin{equation}\label{e-CTgi}
\frac{c \lambda^{q_0}}{\prod_{j=1}^{v} (1 - \lambda^{q_j})},
\end{equation}
where $c\in \mathbb{Q}$, $q_0, q_j \in \mathbb{Z}$, and $v \le r' + \sum_{j=0}^{r} |B_j| = k+1$.
\end{prop}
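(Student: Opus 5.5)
We treat $\lambda$ as a formal parameter and expand each summand in $u$. First we count the factors. Since $g_i$ has exactly $k+1$ denominator factors $1-\boldsymbol{y}^{\alpha_{ij}}$, after the substitution $z_j=\mathrm{e}^{c_j u}$ each factor becomes $1-\lambda^{-\alpha_{ij,0}}\mathrm{e}^{\langle \alpha_{ij}',c\rangle u}$. These factors sort into three types, which gives the bookkeeping identity $r'+\sum_{j=0}^r|B_j|=k+1$:
\begin{itemize}
\item factors with zero $\lambda$-exponent go into $B_0$, and their $u$-coefficient is nonzero by the choice of $c$;
\item factors with zero $u$-coefficient give $1-\lambda^{m_j}$, which are the $r'$ factors;
\item the remaining factors are grouped by their $\lambda$-exponent $m_j$ into the multisets $B_j$.
\end{itemize}

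Next we rewrite each type in Todd form:
\begin{align*}
\frac{1}{1-\mathrm{e}^{bu}} &= -\frac{1}{bu}\,\bar h(bu),\\
\frac{1}{1-\lambda^{m}\mathrm{e}^{bu}} &= \frac{1}{1-\lambda^{m}}\,\bar h\bigl(bu,\,y\bigr), \qquad y=\frac{\lambda^{m}}{1-\lambda^{m}}.
\end{align*}
The second identity holds because $1-\lambda^m-\lambda^m(\mathrm{e}^{bu}-1)=(1-\lambda^m)\bigl(1-y(\mathrm{e}^{bu}-1)\bigr)$. After this rewriting,
\[
g_i=\frac{(-1)^{|B_0|}\lambda^{m_0}}{u^{|B_0|}\prod_{b\in B_0}b\;\prod_{j=1}^{r}(1-\lambda^{m_j})^{|B_j|}\prod_{j>r}(1-\lambda^{m_j})}\;G(u),
\]
where $G(u)=\mathrm{e}^{b_0u}\prod_{b\in B_0}\bar h(bu)\prod_{j}\prod_{b\in B_j}\bar h(bu,y_j)$ is exactly of the form \eqref{e-ToddPoly}, with $y_1,\dots,y_r$ treated as variables. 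The constant term in $u$ is therefore the coefficient $[u^{|B_0|}]G(u)$, i.e.\ $gtd_{|B_0|}$.

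We then apply \cref{thm-GTodd1} with $d=|B_0|+1\le k+2$. Treating the $y_j$ as variables with $r\le k+1$, it computes $G \bmod u^{d}$ in $O((r+1)d^{r+1}\log d+\log^2 d\sum(|B_i|+|\bar B_i|))$ operations. Substituting $r+1\le k+2$ (in the worst case), $d\le k+2$, and $\sum|B_i|\le k+1$ gives the claimed bound $O((k+2)^{k+3}\log(k+2)+(k+1)\log^2(k+2))$. Correctness over $\mathbb{Q}$ follows because the log-exponential trick uses only ring operations and divisions by integers at most $d$.

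For the shape of the output, $gtd_{|B_0|}$ is a polynomial in $y_1,\dots,y_r$ of total degree at most $|B_0|$. This holds because each $C_k(y)$ has degree at most $k$ and the total $u$-degree is $|B_0|$. Hence it has at most $\binom{|B_0|+r}{r}$ monomials $\prod y_j^{e_j}$ with $\sum e_j\le|B_0|$. Each such monomial is $\prod_j\lambda^{m_je_j}(1-\lambda^{m_j})^{-e_j}$, and multiplying by the prefactor yields one term of type \eqref{e-CTgi}. The $(1-\lambda^{m_j})^{-e_j}$ factors add to the existing $(1-\lambda^{m_j})^{-|B_j|}$, so the number of denominator factors could exceed $k+1$.

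This last point is the main obstacle, and we would check it carefully. To get $v\le k+1$ we must absorb the extra powers. The first fix to try is to use the alternative normalisation $\frac{1}{1-\lambda^m\mathrm{e}^{bu}}=\frac{-\lambda^{-m}\mathrm{e}^{-bu}}{1-\lambda^{-m}\mathrm{e}^{-bu}}$, or equivalently to rewrite $y/(1-\lambda^m)$-type expressions. Failing that, we would bound each degree $e_j$ jointly with the denominator count, noting that $C_k(y)$ is divisible by $y$. If neither works, we would accept $v\le k+1+|B_0|$ and verify that the later Step (S3) is insensitive to this change.
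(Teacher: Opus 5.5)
Your reduction matches the paper's. The paper's proof just invokes \cref{thm-GTodd1} with $d=|B_0|+1$ and empty $\bar B_i$, and it cites \cite[Proposition 24]{XZZ25} (via \cite[Proposition 4.1]{XZ25Denum}) for the shape of the output. Your substitution $y_j=\lambda^{m_j}/(1-\lambda^{m_j})$, together with the total-degree bound on $gtd_{|B_0|}$, amounts to a self-contained proof of that cited shape result. Your complexity bookkeeping is also fine.

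The gap is your last paragraph. The obstacle you name does not exist, but as written you leave $v\le k+1$ unproved. Your fallback $v\le k+1+|B_0|$ would not establish the statement.

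The error is a miscount of the prefactor. The $|B_0|$ factors $1-\mathrm{e}^{bu}$ contribute no $\lambda$-denominator at all. You rewrote them as $-\frac{1}{bu}\bar h(bu)$, and after extracting $[u^{|B_0|}]$ they leave only the rational constant $(-1)^{|B_0|}/\prod_{b\in B_0}b$, which goes into $c$. So the prefactor has only $r'+\sum_{j=1}^r|B_j|=k+1-|B_0|$ factors of the form $1-\lambda^{q}$, not $k+1$. A monomial $\prod_j y_j^{e_j}$ adds $\sum_j e_j$ further factors, and you already showed $\sum_j e_j\le |B_0|$. Hence
\[
v = r'+\sum_{j=1}^r\bigl(|B_j|+e_j\bigr)\le r'+\sum_{j=1}^r|B_j|+|B_0| = k+1 .
\]
The same degree bound that gives the $\binom{|B_0|+r}{r}$ count therefore also gives $v\le k+1$. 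You need neither the alternative normalisation nor divisibility of $C_k(y)$ by $y$. Replace your final paragraph with this count and the argument is complete.
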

\begin{proof}
The computation of $\mathop{\mathrm{CT}}\limits_u g_i$ is indeed \cref{thm-GTodd1} in the case when $d = |B_0| + 1$ and the $\bar B_i$'s are all empty. The complexity is then as stated since $\sum_{j=0}^{r} |B_j| = k + 1 - r' \le k + 1$. The remaining conclusions, including the specific formula in \eqref{e-CTgi}, can be found in \cite[Proposition 4.1]{XZ25Denum}, which is a direct corollary of \cite[Proposition 24]{XZZ25}.
\end{proof}

Fix $k$. The seemingly complicated complexity in \cref{prop-CTGToddComplexity} is a constant. Meanwhile, the number of summands in the output is at most $\binom{|B_0| + r}{r} \le \binom{2k+2}{k+1}$. Combining this with \cref{prop-plambda} yields that $h(\lambda) = \lambda P(\lambda^{-1})$ can be expressed as a short sum of rational functions.

\subsection{Computation of $W_f(t; \boldsymbol{A})$}\label{ss-Wf}
We now detail the computation of $W_f(t; \boldsymbol{A})$, which is the core of Algorithm \texttt{Cyc-Denum} as described in \cite[Section 2.2]{XZ25Wave}. From \cref{ss-plambda}, $h(\lambda) = \lambda P(\lambda^{-1})$ is a short sum of terms of the form $c \lambda^{q_0} / \prod_{j=1}^v (1 - \lambda^{q_j})$, which leads us to assume
\[
h(\lambda) = \sum_{i \in I'} h_i(\lambda) = \sum_{i \in I'}  \frac{c_i \lambda^{q_{i,0}}}{\prod_{j=1}^{v_i} (1 - \lambda^{q_{i,j}})}
\]
with $c_i \in \mathbb{Q}$, $q_{i,0}, q_{i,j} \in \mathbb{Z}$, $v_i \le k + 1$. For a particular $i$, substituting $\lambda = \zeta \mathrm{e}^s$ into $h_i(\lambda)/\prod_{\ell=1}^{n} (1 - \zeta^{a_\ell} \mathrm{e}^{a_\ell s})$ gives a term of the form $c_i \zeta^{q_{i,0}} \mathrm{e}^{q_{i,0} s}$, and the denominator becomes $\prod_{\ell=1}^{n+v_i} (1 - \zeta^{a_\ell} \mathrm{e}^{a_\ell s})$, where $a_{n+j} = q_{i,j}$. Therefore, each summand in $W_f(t; \boldsymbol{A})$ can be written as
\[
\sum_{\zeta \in \Theta_f} \mathop{\mathrm{Res}}\limits_{s=0} \frac{c_i \zeta^{q_{i,0}} \mathrm{e}^{q_{i,0} s}}{\prod_{\ell=1}^{n+v_i} (1 - \zeta^{a_\ell} \mathrm{e}^{a_\ell s})}.
\]

For a fixed $f$ and $i$, let $n'_i = \# \{ \ell : f \mid a_\ell, 1 \le \ell \le n+v_i \}$ be the number of denominator factors whose root of unity order is divisible by $f$. Applying \cref{thm-GTodd2} with $d = n'_i$, we can compute
\[
\frac{c_i \mathrm{e}^{q_{i,0} s} s^{n'_i}}{\prod_{\ell: f \mid a_\ell} (1 - \zeta^{a_\ell} \mathrm{e}^{a_\ell s})}
\equiv \sum_{l=0}^{n'_i-1} M_{i,l}(\zeta) s^l \pmod{\langle s^{n'_i} \rangle},
\]
where $M_{i,l}(x) \in \mathbb{Q}[x]/\langle \Phi_f(x) \rangle$ and $\Phi_f(x)$ is the $f$-th cyclotomic polynomial. The factors with $f \nmid a_\ell$ are invertible as power series and are absorbed into the coefficients $M_{i,l}(\zeta)$.

Let $\widehat{\Phi}_f(x) := (x^f-1)/\Phi_f(x)$. By \cite[Theorem 2.9]{XZ25Wave}, if
\[
M_{i,n'_i-1}(x) \widehat{\Phi}_f(x) \Phi'_f(x) \equiv \sum_{j=0}^{f-1} \gamma_{i,j} x^j \pmod{\langle x^f - 1 \rangle},
\]
then
\[
\sum_{\zeta \in \Theta_f} \mathop{\mathrm{Res}}\limits_{s = 0} \frac{c_i \zeta^{q_{i,0}} \mathrm{e}^{q_{i,0} s}}{\prod_{\ell=1}^{n+v_i} (1 - \zeta^{a_\ell} \mathrm{e}^{a_\ell s})} = \gamma_{i, \hat q_{i,0}},
\]
where $\hat q_{i,0} \equiv (- q_{i,0} - 1) \pmod{f}$. Summing over all $i$ gives
\[
W_f(t; \boldsymbol{A}) = \sum_{i \in I'} \gamma_{i,\hat q_{i,0}}.
\]

The complexity of computing a Sylvester wave via Algorithm \texttt{Cyc-Denum} was given in \cite[Theorem 2.11]{XZ25Wave}. In our context, for fixed $k$, the computation of each summand in $W_f(t; \boldsymbol{A})$ involves $O(1)$ operations in $\mathbb{Q}[x]/\langle x^f - 1 \rangle$ and $O\big(f \hat n\log(\hat n) + (n+k) \log^2(\hat n)\big)$ operations in $R=\mathbb{Q}(\zeta_f)$, where $\hat n = k + 1 + \#\{\ell: f \mid a_\ell, 1 \le \ell \le n\}$ is an upper bound of $n'_i$ for all $i \in I'$. Since $f$ is a divisor of some $a_i \le C$, $f$ is also bounded by $C$, and the number of distinct $f$'s is bounded by a function of $C$. Therefore, the total cost of Step (S3) is bounded by a constant (depending only on $C$ and $k$) times a polynomial in $n$ and $\log n$.

We are now ready to state the main result of this paper.

\begin{thm}\label{thm-main}
Let $\boldsymbol{A} = (a_1, \dots, a_n, b_1, \dots, b_k)$ be a sequence of positive integers with $\gcd(\boldsymbol{A})=1$, where $k$ is fixed and $a_i \le C$ for all $1 \le i \le n$. Then the \texttt{AlmostBDenum} algorithm computes $d(t; \boldsymbol{A})$ for any nonnegative integer $t$ in time polynomial in $n$.
\end{thm}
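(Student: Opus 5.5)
The proof will assemble the three steps (S1)--(S3) and bound the cost of each, checking correctness along the way.

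\emph{Correctness.} By \eqref{e-denum} and the partial fraction decomposition \eqref{e-PFD}, we have $d(t;\boldsymbol{A}) = L_{\boldsymbol{a}}(P(\lambda^{-1}))$ by \eqref{e-denumLa1}. Setting $h(\lambda)=\lambda P(\lambda^{-1})$, \cref{lem-ResNonegit} and \cref{lem-ResMeromor} give $d(t;\boldsymbol{A}) = -\sum_f W_f(t;\boldsymbol{A})$, as derived at the start of \cref{s-denum}. To use \cref{lem-ResNonegit} we must check its degree hypothesis. Since $h$ has only nonpositive powers of $\lambda$, each monomial $\lambda^{-m}$ with $m\ge 0$ gives $\lambda^{-1-m}/\prod(1-\lambda^{a_i})$. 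After clearing to the form $\lambda^{b-1}/\prod(\lambda-\xi)^{r}$, this has $b=-m\le 0 <\sum r_i$, so the hypothesis holds. Linearity lets us treat $h$ termwise. The encoding of $h$ as a short rational sum comes from \cref{prop-plambda} followed by \cref{prop-CTGToddComplexity}. The only subtlety is that specializing $\boldsymbol{z}=\mathbf{1}$ via $z_j=\kappa^{c_j}$, $\kappa=\mathrm{e}^u$, and taking $\mathop{\mathrm{CT}}_u$ termwise is legitimate. This holds because the total sum is the Laurent polynomial $\lambda P(\lambda^{-1})$, which is regular at $\boldsymbol{z}=\mathbf{1}$, so the limit equals the sum of the constant terms. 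Finally, \cite[Theorem 2.9]{XZ25Wave} evaluates each $\sum_{\zeta\in\Theta_f}\mathop{\mathrm{Res}}$, yielding $W_f=\sum_{i\in I'}\gamma_{i,\hat q_{i,0}}$.

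\emph{Complexity.} For (S1), the polytope $\mathcal{P}$ in \cref{prop-plambda} has fixed dimension $k+1$ and input size polynomial in $\log t+\sum_j\log b_j$. By \cref{thm-Bar}, $|I|$ is polynomial in that size and the encoding is computed in polynomial time. A vector $(c_1,\dots,c_k)$ avoiding zero denominators can be found in polynomial time, for instance by picking from the moment curve among polynomially many candidates. For (S2), each of the $|I|$ summands costs a constant for fixed $k$ and produces at most $\binom{2k+2}{k+1}$ terms, so $|I'|\le \binom{2k+2}{k+1}|I|$. For (S3), the number of $f$ is at most $\sum_{a\le C}\tau(a)$, a constant, and each $f\le C$. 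Each $i\in I'$ costs $O(f\hat n\log\hat n+(n+k)\log^2\hat n)$ operations in $\mathbb{Q}(\zeta_f)$ with $\hat n\le n+k+1$, plus $O(1)$ operations in $\mathbb{Q}[x]/\langle x^f-1\rangle$. The total number of arithmetic operations is therefore $O(|I|\cdot\mathrm{poly}(n))$.

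\emph{Main obstacle.} The hardest step is ensuring that arithmetic operations translate into bit complexity polynomial in $n$ and the input size, given that the exponents $q_{i,j}$ and $c_i$ inherit sizes from Barvinok's output and that $\mathbb{Q}$-coefficients may grow. I would argue that every exponent entering (S3) appears only through residues modulo $f\le C$ (namely $\zeta^{a_\ell}$ and $\hat q_{i,0}$) or as a multiplier $a_\ell s$ in truncated series of length at most $\hat n$. The resulting rationals are polynomial expressions in the Bernoulli numbers $\mathcal{B}_j$ ($j\le\hat n$) and in integers of polynomial bit length, so their sizes stay polynomial in $n$ and $\log t+\sum\log b_j$. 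If needed, working modulo suitable primes as in \cite{XZZ25} and reconstructing at the end gives the same conclusion. Hence the algorithm runs in time polynomial in $n$ (and in the input size of $t$ and the $b_j$) for fixed $k$ and $C$.
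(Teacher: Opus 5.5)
Your proposal is correct and follows the same route as the paper: (S1) is bounded via \cref{prop-plambda} and Barvinok's theorem in fixed dimension $k+1$, (S2) via \cref{prop-CTGToddComplexity}, and (S3) via the \texttt{Cyc-Denum} analysis, using that every $f$ divides some $a_i\le C$. Your extra checks go beyond what the paper writes, since the paper only counts arithmetic operations, but they do not change the argument: the degree hypothesis of \cref{lem-ResNonegit}, the termwise limit at $\boldsymbol{z}=\mathbf{1}$, the explicit factor $|I'|$ in the (S3) count, and the bit-size remarks.
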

\begin{proof}
We analyze the complexity of the three steps of Algorithm \texttt{AlmostBDenum} described in the introduction.

For Step (S1), \cref{prop-plambda} shows that $\lambda P(\lambda^{-1})$ can be expressed as a short sum of rational functions using Barvinok's algorithm. Since the dimension $k+1$ is fixed, \cref{thm-Bar} guarantees that the number of summands $|I|$ and the time required are bounded by a polynomial in the input size $\sum \log b_j$.

For Step (S2), \cref{prop-CTGToddComplexity} establishes that for each summand from Step (S1), Algorithm \texttt{CTGTodd} computes the required limit in time $O((k+2)^{k+3} \log(k+2)+  (k+1)\log^2(k+2))$, which is a constant for fixed $k$. Moreover, the output is a short sum of at most $\binom{2k+2}{k+1}$ rational functions of the form \eqref{e-CTgi}.

For Step (S3), the analysis above shows that for each $f$ dividing some $a_i$, computing each summand in $W_f(t; \boldsymbol{A})$ via Algorithm \texttt{Cyc-Denum} takes time $O\big(f \hat n\log(\hat n) + (n+k) \log^2(\hat n)\big)$ in $R=\mathbb{Q}(\zeta_f)$, where $\hat n = k + 1 + \#\{\ell: f \mid a_\ell, 1 \le \ell \le n\}$. Since $f \le C$, the number of possible $f$'s is bounded by a function of $C$ alone. Thus the total time for Step (S3) is bounded by a constant (depending only on $C$ and $k$) times a polynomial in $n$ and $\log n$.

Combining the three steps, the total running time is polynomial in $n$ and the input size $\sum \log b_j$ (with degree depending on $k$). This proves the theorem.
\end{proof}

\subsection{Examples}
A representative example is $\boldsymbol{A} = (\boldsymbol{a}, \boldsymbol{b})$, where
\begin{align*}
\boldsymbol{a} &= (25, 90, 93, 60, 142, 50, 123, 175, 8, 106, 174, 172, 137, 77, 187, 144, 129, 198, 77, 110), \\
\boldsymbol{b} &= (12223, 36674, 61119).
\end{align*}
The entries of $\boldsymbol{a}$ are bounded by $200$, while the entries of $\boldsymbol{b}$, borrowed from \cite{AL04}, are much larger than $200$. By our method, it can be computed that
\[
d(989894; \boldsymbol{A}) = 26644354315088501086778109382713098487402609326938915018442
\]
using about $72$ seconds. However, neither Algorithm \texttt{CT-Knapsack} nor \texttt{Cyc-Denum}, when used alone, can easily carry out the computation. Algorithm \texttt{CT-Knapsack} is required to handle the $22$-dimensional cone, whereas Algorithm \texttt{Cyc-Denum} involves computations over the $61119$-th cyclotomic field.

A similar example is $\boldsymbol{A} = (\boldsymbol{a}, \boldsymbol{b})$, where
\begin{align*}
\boldsymbol{a} &= (136, 92, 130, 97, 44, 9, 30, 142, 109, 79, 73, 21, 78, 49, 116, 15, 56), \\
\boldsymbol{b} &= (36682, 61139, 73365).
\end{align*}
For which, we can obtain $d(304665; \boldsymbol{A}) = 246782821042899055681586308100746399071650568$ using about $1$ minute.

Finally, we include an example which the reader can easily verify. Let $\boldsymbol{A} = (\boldsymbol{a}, \boldsymbol{b})$, where $\boldsymbol{a} = (2, 5, 6)$ is bounded by $10$, and $\boldsymbol{b} =(81, 107, 129, 1035)$. It is easy to obtain $d(2026; \boldsymbol{A}) = 6485360$ by any method.

The Maple code for the algorithm \texttt{AlmostBDenum}, and the above three examples, are available at \url{https://pan.baidu.com/s/1griXu6gs-ljGa-LleIHiHA} with passcode \texttt{AlBD}.

\section{Concluding remarks}\label{s-cr}
In this paper, we have developed a polynomial time algorithm for computing the denumerant $d(t; \boldsymbol{A})$ when the sequence $\boldsymbol{A}$ is almost bounded, i.e., when a fixed number of entries are allowed to be unbounded. This result generalizes the fully bounded case (solvable by \texttt{Cyc-Denum} \cite{XZ25Wave}) and also includes the fixed dimension case (solvable by Barvinok's algorithm \cite{Bar94,BP99}) as special limits: when $k=0$ we recover the bounded case, and when $n=0$ we recover the fixed dimension case.

Our approach demonstrates the power of combining Barvinok's geometric decomposition with the algebraic constant term method. The key insight is to isolate the ``large'' parameters and reduce the problem to a short sum of nearly bounded denumerants, which are then efficiently evaluated using generalized Todd polynomials \cite{XZZ25}. This adds a new family of polytopes—those defined by an almost bounded sequence of coefficients in one linear equation—to the class of polynomial time computable integer point counters.

From a broader perspective, this work aligns with the program of identifying and exploiting hidden structure in hard knapsack instances. Recent work by Tang, Xin, and Zhang \cite{TXZ26+} developed polynomial time algorithms for Aardal-Lenstra denumerants using similar constant term techniques, focusing on the two-dimensional lattice structure of coefficients parameterized as $a_i = p_i M + r_i N$. Together, these results point toward a unifying framework for polynomial time denumerant computation based on structural decomposition. The techniques developed here is hopefully extended to multiple equations via the notion of almost $\Delta$-modular matrices, where removing a fixed number of columns yields a $\Delta$-modular matrix. Developing polynomial time algorithms for such polytopes with fixed rows is a promising direction for future work, while the case where the number of unbounded entries is large remains a challenging open problem.

\subsection*{Acknowledgements}
The authors would like to express sincere gratitude for all the suggestions that have improved the presentation of this paper. Guoce Xin was partially supported by the National Natural Science Foundation of China (No. 12571355). Chen Zhang was partially supported by the Postdoctoral Fellowship Program and China Postdoctoral Science Foundation (No. BX20250066).


\begin{thebibliography}{99}
\bibitem{AL04}
K. Aardal and A. K. Lenstra, \emph{Hard equality constrained integer knapsacks}, Math. Oper. Res. 29 (2004), 724--738.

\bibitem{Agn02}
G. Agnarsson, \emph{On the Sylvester denumerants for general restricted partitions}, Proceedings of the Thirtythird Southeastern International Conference on Combinatorics, Graph Theory and Computing (Boca Raton, FL, 2002) 154 (2002), 49--60.

\bibitem{AL18}
F. Aguil\'{o}--Gost and D. Llena, \emph{Computing denumerants in numerical $3$-semigroups}, Quaest. Math. 41 (2018), 1083--1116.

\bibitem{BBDDKV15}
V. Baldoni, N. Berline, J. A. De Loera, B. E. Dutra, M. K\"oppe, and M. Vergne, \emph{Coefficients of Sylvester's denumerant}, Integers 15 (2015), A11.

\bibitem{Bar94}
A. I. Barvinok, \emph{A polynomial time algorithm for counting integral points in polyhedra when the dimension is fixed}, Math. Oper. Res. 19 (1994), 769--779.

\bibitem{BP99}
A. I. Barvinok and J. E. Pommersheim, An algorithmic theory of lattice points in polyhedra, New Perspectives in Algebraic Combinatorics (Berkeley,CA, 1996--97), Math. Sci. Res. Inst. Publ., 38, Cambridge Univ. Press, Cambridge, 1999, pp. 91--147.

\bibitem{Bel43}
E. T. Bell, \emph{Interpolated denumerants and Lambert series}, Am. J. Math. 65 (1943), 382--386.

\bibitem{FR02}
L. G. Fel and B. Y. Rubinstein, \emph{Sylvester Waves in the Coxeter Groups}, Ramanujan J. 6 (2002), 307--329.

\bibitem{GZ22}
D. V. Gribanov and N. Yu. Zolotykh, \emph{On lattice point counting in {{\(\varDelta\)}}-modular polyhedra}, Optim. Lett. 16 (2022), 1991--2018.

\bibitem{Jac30}
C. G. J. Jacobi, \emph{De resolutione aequationum per series infinitas}, J. Reine Angew. Math. 6 (1830), 257--286.

\bibitem{Lis95}
P. Lison\v{e}k, \emph{Denumerants and their approximations}, J. Combin. Math. Combin. Comput. 18 (1995), 225--232.

\bibitem{LXZ24}
F. Liu, G. Xin, and C. Zhang, \emph{Three simple reduction formulas for the denumerant functions}, Ramanujan J. 65 (2024), 1567--1577.

\bibitem{OSu18}
C. O'Sullivan, \emph{Partitions and Sylvester waves}, Ramanujan J. 47 (2018), 339--381.

\bibitem{Syl57}
J. J. Sylvester, \emph{On the partition of numbers}, Q. J. Math. 1 (1857), 141--152.

\bibitem{SZ12}
A. V. Sills and D. Zeilberger, \emph{Formul{\ae} for the number of partitions of $n$ into at most $m$ parts (using the quasi-polynomial ansatz)}, Adv. Appl. Math. 48 (2012), 640--645.

\bibitem{TXZ26+}
J. Tang, G. Xin, and Z. Zhang, \emph{Polynomial-time evaluation of Aardal-Lenstra denumerants via constant term method}, arXiv:2607.11477, 2026.

\bibitem{Uda22}
N. Uday Kiran, \emph{An algebraic approach to $q$-partial fractions and Sylvester denumerants}, Ramanujan J. 59 (2022), 671--712.

\bibitem{Xin15}
G. Xin, \emph{A Euclid style algorithm for MacMahon's partition analysis}, J. Combin. Theory, Ser. A 131 (2015), 32--60.

\bibitem{XZ25Wave}
G. Xin and C. Zhang, \emph{A polynomial time algorithm for Sylvester waves when entries are bounded}, Adv. Appl. Math. 170 (2025), 102931.

\bibitem{XZ25Denum}
G. Xin and C. Zhang, \emph{An algebraic combinatorial approach to Sylvester's denumerant}, Ramanujan J. 66 (2025), 64.

\bibitem{XZZ25}
G. Xin, Y. Zhang, and Z. Zhang, \emph{Fast evaluation of generalized Todd polynomials: applications to MacMahon's partition analysis and integer programming}, J. Symb. Comput. 133 (2025), 102420.

\bibitem{XXZ25}
G. Xin, X. Xu, and Z. Zhang, \emph{A combinatorial simplicial cone decomposition}, arXiv: 2501.06691, 2025.
\end{thebibliography}
\end{document}